\newtheorem{thm}{Theorem}[section]
\newtheorem{cor}[thm]{Corollary}
\newtheorem{lem}[thm]{Lemma}
\newtheorem{prop}[thm]{Proposition}
\newtheorem{defn}[thm]{Definition}
\newtheorem{rem}[thm]{Remark}
\def\N{\mathbb N}
\def\Z{\mathbb Z}
\def\sC{\mathscr{C}}
\DeclareMathOperator{\Sym}{Sym}
\DeclareMathOperator{\dist}{dist_{\it S}}
\DeclareMathOperator{\Frob}{Frob}
\DeclareMathOperator{\traza}{tr}
\DeclareMathOperator{\id}{id}
\DeclareMathOperator{\HomDist}{HomDist_{\it S}}
\DeclareMathOperator{\defect}{def_{\it R}}
\DeclareMathOperator*{\D}{{\it D_{(S,R)}}}
\DeclareMathOperator*{\rateD}{{\it {D}_{\Gamma}}}
\DeclareMathOperator{\NC}{\left\langle  \left\langle R \right\rangle \right\rangle}
\title{PROPERTY OF DEFECT DIMINISHING AND STABILITY}
\author{ Marco Antonio Garc\'ia Morales and Lev Glebsky}
\begin{document}
\maketitle
\begin{abstract}
We show that the defect diminishing is equivalent to the stability with linear rate. 
\end{abstract}

\section{Introduction}

The stability of a group $\Gamma$ (with respect to a class of groups $\sC$) means that any almost homomorphism to $\sC$ is
close to a homomorphism, see Definition~\ref{stability}. In \cite{1} the notion of defect diminishing was introduced, see Definition~\ref{def_dd1} and
Definition~\ref{def_dd2}. It was shown in \cite{1} that for some classes $\sC$ and $\Gamma$-modules $M$ the vanishing of the second cohomology
$H^2(\Gamma,M)$ implies the defect diminishing and that the defect diminishing implies stability. So, the defect diminishing is a kind of 
linear stability. 

In the present paper, we show that (under weaker assumptions) defect diminishing is equivalent to stability with a linear rate for finitely presented groups.
Particularly, this implies that there are stable groups that do not have defect diminishing. Indeed, O. Becker and J. Mosheiff \cite{Oren} showed that the rate of stability
of $\Z^d$ is polynomial (with respect to symmetric groups with normalized Hamming distance). 
It is worth mentioning that the stability of any abelian group (with respect to symmetric groups with normalized Hamming distance) was proven
by G. Arzhantseva and L. Paunescu in \cite{Goulnara_and_Liviu}.


\section{Stability}

Let $S$ be a finite set of symbols. We denote by $F(S)$ the free group on $S$. Let $R \subseteq F(S)$ be finite and $ \Gamma $ be a finitely presented group $\Gamma=\left\langle S\;|\;R\right\rangle= F(S)  /  \NC $ where $ \NC $ is the normal subgroup of $F(S)$ generated by $R$. Let $ \sC $ be a class of groups, all equipped with bi-invariant metric. Any map $ \phi: S \rightarrow G $, for a group $ G \in \sC $ uniquely determines a homomorphism $ F(S) \rightarrow G $ that we also denote by  $ \phi $. 

\begin{defn} \cite{1} \label{defdistHomDist} Let $G \in \sC$ and let $\phi, \psi: S \rightarrow G$ be maps. The defect of $\phi$ is defined by:
\begin{center}
$\defect (\phi)= \max\limits_{r \in R}d_G(\phi(r),1_G)$
\end{center}

The distance between $\phi$ and $\psi$ is defined by:
\begin{center}
$\dist(\phi,\psi)=\max\limits_{s \in S}d_G(\phi(s),\psi(s))$
\end{center}

The homomorphism distance of $\phi$ is defined by:
\begin{center}
$\HomDist(\phi)= \inf\limits_{\pi \in Hom(\Gamma, G)} \dist(\phi, \pi\restriction_S)$
\end{center}

Let $\langle \sC^S \rangle= \bigcup\limits_{G \in \sC } G^S$ where $G^S= \lbrace \phi: S \rightarrow G \rbrace$, that is, $\langle \sC^S \rangle$ are all possible maps $\phi: S \rightarrow G$ for $G \in \sC $.
\end{defn}

\begin{defn} \label{stability} \cite{Thom} A finitely presented group $\Gamma$ is called $ \sC $-stable if  for all $\epsilon >0 $ there exists $\delta>0$ such that 
 for all $\phi \in \langle \sC^S \rangle$ the inequality
$\defect (\phi) < \delta$ implies $\HomDist(\phi) < \epsilon$. Let us restate it to avoid ambiguity: 
$$
\forall \epsilon >0 \,\exists \delta>0 \,\forall \phi \in \langle \sC^S \rangle\;\left(
\defect(\phi) < \delta \Rightarrow \HomDist (\phi) < \epsilon \right). 
$$ 
\end{defn}

\begin{rem}The stability of $\Gamma$ does not depend on the particular choice of the presentation of the group $ \Gamma $ (see \cite{Goulnara_and_Liviu}): Tietze transformations preserve stability since the metric is bi-invariant. The stability of a group does depend on the class $ \sC $. 
\end{rem}

Interesting examples $ \sC =\{(G_n,d_n)\;|\;n\in \N\}$ are:

\begin{enumerate}[(1)]
\item \label{example1} $G_n = U(n)$, the group of Unitary $n\times n$ matrix.  The metric $d_n$ is induced by the normalized Hilbert-Schmidt norm $ \Vert A \Vert_{HS} = \sqrt{\frac{1}{n} \traza (A^* A) } $ ($d_n(A,B) = \Vert A - B \Vert$). 
\item \label{example2} $G_n = U(n)$, the metric $d_n$ is induced by the Schatten $p$-norm $ \Vert A \Vert_{p} =  \left(  \traza \vert T \vert^p \right) ^{\frac{1}{p}} $, where $\vert T \vert = \sqrt{T^*T}$. Note that if $p=2$ then  $ \Vert A \Vert_{2} = \Vert A \Vert_{\Frob}$.
\item \label{example3} $G_n = U(n)$, the metric $d_n$ is induced by the operator norm $ \Vert A \Vert_{op} = \sup\limits_{\Vert v \Vert =1} \Vert Av \Vert $ also known as Schatten $\infty$-norm.
\item \label{example4} $G_n=\Sym(n)$, the symmetric group of $n$ elements. $d_n$ is the normalized Hamming distance:
$d_n(\alpha,\beta)=\frac{1}{n}|\{j\;|\;\alpha(j)\neq\beta(j)\}|$.
\end{enumerate}


\subsection{Rate of stability}
The rate of stability is, roughly speaking, the dependence of $\epsilon$ and $\delta$ in Definition~\ref{stability}. See \cite{Oren} for details.
To make this precise we define the function 
$\D :\mathbb{R^+} \rightarrow \mathbb{R^+} $ as follows:

$$
\D (\delta)= \sup \limits_{\phi \in \langle \sC^S \rangle} \lbrace \HomDist (\phi) \mid \defect (\phi)< \delta \rbrace
$$

The function $\D $ is monotone increasing and depends on the presentation of the group $\Gamma$, but we show now that this dependence is just linear.\\

The following lemma is a reformulation of Definition \ref{stability}. The analogue of the lemma is used as the definition of stability in \cite{Ozawa_Thom}

\begin{lem} $ \displaystyle\lim_{\delta \rightarrow 0^{+}} \D (\delta) =0$ if and only if $\Gamma$ is $\sC$-stable. 
\end{lem}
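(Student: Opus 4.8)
The plan is to prove the equivalence directly from the definitions, the only real ingredient being the monotonicity of $\D$ already noted in the text just above the statement.

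First I would record the elementary observation that $\delta \mapsto \D(\delta)$ is monotone increasing: enlarging $\delta$ enlarges the set $\{\phi \in \langle \sC^S \rangle : \defect(\phi) < \delta\}$ over which the supremum defining $\D(\delta)$ is taken, so the supremum cannot decrease. Consequently $\lim_{\delta \to 0^{+}} \D(\delta)$ exists and equals $\inf_{\delta > 0} \D(\delta)$. Since $\HomDist(\phi) \geq 0$ for every $\phi$, we have $\D(\delta) \geq 0$, and therefore the condition $\lim_{\delta \to 0^{+}} \D(\delta) = 0$ is equivalent to the statement that for every $\epsilon > 0$ there exists $\delta > 0$ with $\D(\delta) < \epsilon$. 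This reformulation is the bridge to the $\forall \epsilon\,\exists \delta$ shape of Definition~\ref{stability}.

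For the direction ``stability $\Rightarrow$ limit $=0$'', I would fix $\epsilon > 0$ and apply Definition~\ref{stability} with the value $\epsilon/2$ in place of $\epsilon$, obtaining $\delta > 0$ such that $\defect(\phi) < \delta$ forces $\HomDist(\phi) < \epsilon/2$ for every $\phi$. Taking the supremum over all such $\phi$ gives $\D(\delta) \leq \epsilon/2 < \epsilon$, and the reformulation above yields the vanishing of the limit. For the converse, assuming the limit is $0$, I would fix $\epsilon > 0$, select $\delta > 0$ with $\D(\delta) < \epsilon$, and note that any $\phi$ with $\defect(\phi) < \delta$ satisfies $\HomDist(\phi) \leq \D(\delta) < \epsilon$, which is exactly the defining condition for $\sC$-stability.

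The only subtlety --- and the one step I would be careful about --- is the mismatch between the strict inequality $\HomDist(\phi) < \epsilon$ appearing in the definition of stability and the supremum in the definition of $\D$, since a family of values each strictly below $\epsilon$ may have supremum equal to $\epsilon$. Passing to $\epsilon/2$ in the forward direction is precisely what absorbs this gap, so no genuine obstacle remains; the argument is a direct translation between the $\forall \epsilon\,\exists \delta$ formulation and the vanishing of a monotone limit.
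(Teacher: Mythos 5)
Your proof is correct and takes exactly the route the paper intends: the paper states this lemma without proof, describing it as a mere reformulation of Definition~\ref{stability}, and your argument (monotonicity of $\D$, the limit being $\inf_{\delta>0}\D(\delta)$, and the $\epsilon/2$ device to reconcile the strict inequality with the supremum) is just the careful spelling-out of that reformulation. Nothing is missing.
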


%

Following O. Becker and J. Mosheiff, we define the rate stability $\rateD$ of the group $\Gamma$ as a class of functions (see Definition~\ref{stability_rate}).


\begin{defn} \label{def_precede}
  Let $f,g: (0, \delta_0] \rightarrow \mathbb{R^{+}}$ be monotone nondecreasing functions. Write $f \preceq g$ if $f(\delta) \leq C g(C\delta)+C\delta$ for some $C>0$ and all $\delta\in (0,\delta_0]$ for some $\delta_0>0$. We define the equivalence relation $\thicksim$ by saying that $f \thicksim g$ if and only if $f \preceq g$ and $g \preceq f $ (notice that the relation $\preceq$ is reflexive and transitive). Let $\left[ f \right]$ denote the class of $f$ with regard
      to this equivalence relation.  Clearly, $\preceq$ defines a partial order on equivalence classes: $\left[ f \right] \preceq \left[ g \right]$ if and only if
      $ f \preceq g $. 
\end{defn}

Note that if $f \preceq \id$ then $f(\delta) \leq M \delta$ for some $M$. Here $\id$ is an identical function: $\id(\delta)=\delta$.

\begin{prop} \label{prop_rate} \cite{Oren}
Let $\Gamma=\left\langle S\;|\;R\right\rangle$ be a finitely presented group. If $ \Gamma = \langle S'\;|\;R'  \rangle$ is another presentation of $\Gamma$. Then $D_{(S,R)} \sim D_{(S',R')}$.
\end{prop}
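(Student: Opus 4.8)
The plan is to prove Proposition~\ref{prop_rate}, which says that different presentations of the same finitely presented group give equivalent rate functions $\D$ under the relation $\sim$. By the transitivity and symmetry built into the definition of $\sim$, it suffices to prove $\Dp \preceq \D$ (and then the reverse by symmetry of the roles of the two presentations), so I would focus on establishing one-sided domination. Since any two finite presentations of the same group are connected by a finite sequence of Tietze transformations, the natural strategy is to reduce to checking the single elementary moves: adding/removing a redundant relator, and adding/removing a generator together with a defining relation expressing it as a word in the others. Because $\preceq$ is transitive (as noted in Definition~\ref{def_precede}) and each Tietze move is its own inverse up to the same relation, proving $\D \preceq \Dp$ for each of the four elementary moves gives the full statement by composing finitely many such inequalities (absorbing the finitely many constants $C$ into a single one).

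Let me sketch the core estimate for the generator-adding move, which is the substantive case. Suppose $S' = S \cup \{t\}$ and $R' = R \cup \{t w^{-1}\}$ where $w \in F(S)$ is a word in the old generators. Given $\phi' \in \langle \sC^S{}' \rangle$ with small defect with respect to $(S',R')$, I would restrict it to a map $\phi$ on $S$; since the relators in $R$ are unchanged, $\defectp(\phi') $ controls $\defect(\phi)$ directly (each $r \in R$ is a word in $S$ alone). A homomorphism $\pi$ close to $\phi$ on $S$ then extends to a homomorphism $\pi'$ on $S'$ by setting $\pi'(t) = \pi(w)$; using bi-invariance of the metric and the fact that $w$ is a fixed word of bounded length, one estimates $d_G(\phi'(t), \pi(w))$ by the extra relator $t w^{-1}$ being near the identity plus the accumulated word-length cost of moving from $\phi$ to $\pi$. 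This yields $\HomDistp(\phi') \le C\,\HomDist(\phi) + C\,\defectp(\phi')$, where $C$ depends only on the (bounded) lengths of the fixed words $w$ and the relators, and the $\defect$ term is what produces the additive $C\delta$ in the definition of $\preceq$. The reverse direction (removing the generator) is symmetric: substituting $w$ for $t$ inside every relator costs only a bounded word-length factor.

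The main obstacle, and the reason the definition of $\preceq$ carries both a multiplicative and an additive term, is the word-length accumulation. When a fixed word $w = s_1^{\pm 1}\cdots s_k^{\pm 1}$ is evaluated, the bi-invariant triangle inequality gives $d_G(\phi(w), \pi(w)) \le \sum_i d_G(\phi(s_i), \pi(s_i)) \le k\,\dist(\phi, \pi\restriction_S)$, so the multiplicative constant $C$ is forced to absorb these bounded lengths $k$. The subtle point is that the relators of the new presentation need not be relators of the old one verbatim: after a generator-removing substitution, a relator $r' \in R'$ becomes $r'(w)$, and one must check that $\defect(\phi)$ with respect to the old relators is still controlled by $\defectp(\phi')$ up to a linear factor — this is where the additive $C\delta$ slack is genuinely used, since evaluating the substituted relator introduces terms bounded by the defect itself rather than by the HomDist. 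I expect verifying these estimates carefully for the generator moves (rather than the almost-trivial relator moves) to be the real content; the relator-adding/removing cases only require observing that enlarging or shrinking $R$ changes $\defect$ monotonically and leaves the set of homomorphisms unchanged, so they contribute only a constant factor and no genuine difficulty.
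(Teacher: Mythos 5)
The paper does not actually prove this proposition: it is quoted from Becker and Mosheiff \cite{Oren}, and the only in-paper hint is the earlier remark that Tietze transformations preserve stability because the metric is bi-invariant. Your route --- reduce to elementary Tietze moves, use transitivity of $\preceq$ to compose finitely many comparisons, and for the generator move estimate $\HomDistp(\phi')\le C\,\HomDist(\phi)+C\,\defectp(\phi')$ via bi-invariance and bounded word length --- is exactly the standard argument, and your treatment of the generator moves is sound: restriction gives $\defect(\phi)\le\defectp(\phi')$ since $R\subseteq R'$ consists of words in $S$; the extension $\pi'(t)=\pi(w)$ of a nearby homomorphism gives $d_G(\phi'(t),\pi(w))\le d_G(\phi'(tw^{-1}),1_G)+|w|\dist(\phi,\pi\restriction_S)$; and the defect term correctly lands in the additive $C\delta$ slack of Definition~\ref{def_precede}.

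The one genuine flaw is your dismissal of the relator moves as following from monotonicity. If $R'=R\cup\{r\}$ with $r\in\NC$, then monotonicity of the defect ($\defect(\phi)\le\defectp(\phi)$, as $R\subseteq R'$) proves only $\Dp\preceq\D$; it gives nothing for $\D\preceq\Dp$, which needs the \emph{reverse} comparison of defects. For that direction you must use that $r$ is a consequence of $R$: write $r=\prod_{i=1}^{m}u_i r_{j_i}^{\pm1}u_i^{-1}$ in $F(S)$, and use bi-invariance (conjugation invariance together with $d_G(ab,1_G)\le d_G(a,1_G)+d_G(b,1_G)$) to obtain $d_G(\phi(r),1_G)\le m\,\defect(\phi)$, hence $\defectp(\phi)\le m\,\defect(\phi)$ and $\D(\delta)\le\Dp(m\delta)$, which is an instance of $\preceq$ thanks to the $g(C\delta)$ allowance in the definition. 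So this move also carries a constant --- the number $m$ of conjugate factors, the exact analogue of the word-length constant you isolate in the generator move --- and it is here that redundancy of the added relator actually enters the estimates. The omission is fixable with a tool you already use elsewhere, but as written the claim that the relator cases need only monotonicity is false in the direction that matters.
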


\begin{defn} \label{stability_rate}
Let $\Gamma=\left\langle S\;|\;R\right\rangle$ be a finitely presented group. The  rate stability $\rateD$ of the group $\Gamma$ is the equivalence class $\rateD = \left[ D_{(S,R)} \right]$.
\end{defn}

Proposition~\ref{prop_rate} implies that the rate of stability $\rateD$ of the finitely presented group $\Gamma$ does not depend on the presentation of $\Gamma$.


By the definition of $\thicksim$ the rate of stability $\rateD$ of a group $\Gamma$ can not be faster then linear.
The following lemma shows that it is not just by definition of $\thicksim$ but rather a natural phenomenon for non-free groups.

\begin{lem} \cite{Oren} \label{lem_lin}
  Let $ \Gamma = \langle S\;|\;R  \rangle $ be a finitely presented group with $R \neq \emptyset$, $R \neq 1_\Gamma$ and $\sC$ is the class of symmetric groups with the normalized Hamming
  distance. Then there exists $C>0$ and $\delta_0>0$ such that $C \delta \leq \D (\delta)$ for all $ \delta \in (0,\delta_0]$.
\end{lem}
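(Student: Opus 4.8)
The plan is to split the lemma into a general lower bound on $\HomDist$ that holds for any bi-invariant metric, and a construction, specific to symmetric groups, that realizes almost-homomorphisms whose defect fills every sufficiently small window. First I would record the elementary \emph{word-length bound}
$$
\defect(\phi)\le L\cdot\HomDist(\phi),\qquad L:=\max_{r\in R}\ell(r),
$$
where $\ell(r)$ is the length of the reduced word $r$. To prove it, fix $G\in\sC$, a map $\phi\colon S\to G$, and any $\pi\in\mathrm{Hom}(\Gamma,G)$. Writing a relator as $r=a_1\cdots a_k$ with $a_i\in S\cup S^{-1}$ and telescoping, bi-invariance of $d_G$ gives $d_G(\phi(r),\pi(r))\le\sum_{i=1}^k d_G(\phi(a_i),\pi(a_i))\le k\,\dist(\phi,\pi\restriction_S)$, using that $d_G(\phi(s)^{\pm1},\pi(s)^{\pm1})=d_G(\phi(s),\pi(s))$. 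Since $\pi$ is a homomorphism of $\Gamma$ we have $\pi(r)=1_G$, so $d_G(\phi(r),1_G)\le L\,\dist(\phi,\pi\restriction_S)$; taking the maximum over $r\in R$ and then the infimum over $\pi$ yields $\HomDist(\phi)\ge\defect(\phi)/L$ for every $\phi\in\langle\sC^S\rangle$.

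By this bound it suffices to show that for each small $\delta>0$ there is a map $\phi\in\langle\sC^S\rangle$ with $\tfrac{\delta}{2}\le\defect(\phi)<\delta$. Indeed, such a $\phi$ lies in the feasible set defining $\D(\delta)$, whence $\D(\delta)\ge\HomDist(\phi)\ge\defect(\phi)/L\ge\delta/(2L)$, so we may take $C=1/(2L)$.

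For the construction, use that $R\ne\emptyset$ and $R\ne 1_\Gamma$ to fix a relator $r\in R$ that is a nontrivial reduced word in $F(S)$. Because free groups are residually finite, there is a finite group detecting $r$, and after a Cayley embedding a homomorphism $\rho\colon F(S)\to\Sym(d)$ with $\rho(r)\ne 1$. For an integer $m\ge0$ let $\phi_m\colon S\to\Sym(m+d)$ act on $[m]\sqcup[d]$ by $\phi_m(s)=\mathrm{id}_{[m]}\sqcup\rho(s)$. Then $\phi_m(r')=\mathrm{id}_{[m]}\sqcup\rho(r')$ for every $r'\in R$, so $d_{m+d}(\phi_m(r'),1)$ equals the number of points moved by $\rho(r')$ divided by $m+d$, and hence $\defect(\phi_m)=M/(m+d)$ with $M:=\max_{r'\in R}\#\{\text{points moved by }\rho(r')\}$ a fixed positive integer (positive because $\rho(r)\ne 1$). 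It remains to choose $m$ so that this value lies in $[\delta/2,\delta)$, i.e.\ so that $j:=m+d\in(M/\delta,\,2M/\delta]$. This interval has length $M/\delta\ge 1$ once $\delta\le M$, so it contains an integer, and that integer exceeds $M/\delta\ge d$ as soon as $\delta\le M/d$; taking $\delta_0=M/d$ and $m=j-d\ge0$ produces the required $\phi_m$.

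The short conceptual core is the word-length bound, which converts ``defect not too small'' into ``$\HomDist$ not too small''. The delicate point is the last step: the achievable defects form the discrete set $\{M/j\}_{j\ge d}$, and one must check they accumulate densely enough at $0$ to meet every window $[\delta/2,\delta)$. The disjoint-union trick with denominator $m+d$ is precisely what forces the gaps between consecutive achievable defects to shrink to zero, and this is exactly where the structure of symmetric groups with the normalized Hamming distance is used rather than an arbitrary class $\sC$.
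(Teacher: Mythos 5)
Your proof is correct, but there is nothing in the paper to compare it against: Lemma~\ref{lem_lin} is stated with a citation to \cite{Oren}, and the paper gives no proof of it. What you have written is in effect a self-contained reconstruction of the Becker--Mosheiff argument, and both halves check out. The reverse Lipschitz bound $\defect(\phi)\le L\cdot\HomDist(\phi)$ (telescoping plus bi-invariance, including the observation $d_G(x^{-1},y^{-1})=d_G(x,y)$ needed for letters in $S^{-1}$) is valid for any class $\sC$ with bi-invariant metrics, and it correctly reduces the lemma to exhibiting maps whose defect lands in the window $[\delta/2,\delta)$. The padding construction $\phi_m(s)=\mathrm{id}_{[m]}\sqcup\rho(s)$, with $\rho\colon F(S)\to\Sym(d)$ a permutation representation not killing the chosen nontrivial relator (available by residual finiteness of free groups), gives $\defect(\phi_m)=M/(m+d)$ with $M\ge 2$ fixed, and your closing arithmetic is sound: for $\delta\le M/d$ the interval $(M/\delta,\,2M/\delta]$ has length $M/\delta\ge 1$, hence contains an integer $j$, and $j>M/\delta\ge d$ forces $m=j-d\ge 0$; this yields $\D(\delta)\ge \delta/(2L)$, i.e.\ the conclusion with $C=1/(2L)$ and $\delta_0=M/d$. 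Two small remarks. First, your reading of the hypothesis ``$R\neq\emptyset$, $R\neq 1_\Gamma$'' as ``some $r\in R$ is a nontrivial reduced word in $F(S)$'' is the right one, and your argument shows the conclusion holds even when the quotient $\Gamma$ happens to be free, so long as the presentation has a nontrivial relator. Second, you correctly isolate where the hypothesis on $\sC$ enters: the Lipschitz bound is generic, while the padding step is what uses symmetric groups with the normalized Hamming distance, since it is there that one can force the achievable defects to accumulate at $0$ with gaps of order $\delta^2$.
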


By O. Becker and J. Mosheiff \cite{Oren} if $\sC$ is symmetric group with Hamming distance and $d=2,3,4...$ then
$O(\delta^\frac{1}{b})\preceq D_{\Z^d} \preceq O(\delta^\frac{1}{c})$  for any $b<2$ and some $c=c_d$, depending on $d$.



\section{Property of defect diminishing}

In this section we give the definition of the property of defect diminishing and a proof of the main theorem.

\begin{defn} An ultrafilter $\mathcal{U}$ on $\mathbb{N}$ is a collection of subsets of $\mathbb{N}$, such that:

\begin{enumerate}[(i)]
\item $A \in \mathcal{U}$ and $ A \subset B $ implies $B \in \mathcal{U} $
\item $A, B \in \mathcal{U} $ implies $ A \cap B \in \mathcal{U}$
\item $A \notin \mathcal{U}$ if, and only if $\mathbb{N} \setminus A \in \mathcal{U}$
\end{enumerate}
\end{defn}

We say that $\mathcal{U}$ is non-principal if $\lbrace n \rbrace \notin \mathcal{U} $ for every $n \in \mathbb{N}$. The existence of non-principal ultrafilters on $\mathbb{N}$ is ensured by the axiom of choice.
We fix a non-principal ultrafilter $\mathcal{U}$ on $\mathbb{N}$. Given a bounded sequence $(x_n )_{n \in \mathbb{N}}$ of real numbers we denote the limit along the ultrafilter by  $ \lim\limits_{n \to \mathcal{U}} x_n \in (- \infty, \infty)$. Formally, the limit is the unique $x \in \mathbb{R}$ such for all $\epsilon > 0$ we have $ \lbrace n \in \mathbb{N}: \mid x_n - x \mid < \epsilon  \rbrace \in \mathcal{U}$. For more information on ultrafilters and ultralimits see \cite{Ultra} appendix B. \\

We will use the notation Landau, let $(x_n )_{n \in \mathbb{N}}$ and $(y_n )_{n \in \mathbb{N}}$ be two sequences of positive real numbers, we denoted by $x_n= O_{\mathcal{U}}(y_n)$ if there exists $C>0$ such that $\lbrace n \mid x_n \leq C y_n \rbrace \in \mathcal{U}$. We denoted by $x_n=o_{\mathcal{U}}(y_n)$ if there is a third sequence of positive real numbers $\varepsilon_n$ such that $ \lim\limits_{n \to \mathcal{U}} \varepsilon_n = 0$ and $x_n= \varepsilon_n y_n$.

\begin{defn} \cite{1} A sequence of maps $\phi_n : S \rightarrow G_n$, for $(G_n,d_n) \in \sC $ is called an asymptotic homomorphism to $\sC$ if 
\begin{center}
$ \lim\limits_{n \to \mathcal{U}} \defect (\phi_n)=0$
\end{center}
\end{defn}

\begin{defn}\label{def_dd1}
Let $\phi_n: S \rightarrow G_n$ with $G_n \in \sC$ be an asymptotic homomorphism, we say that an asymptotic homomorphism $\phi'_n: S \rightarrow G_n$ diminishes the defect of $ (\phi_n)_{n \in \mathbb{N}}$ if:

\begin{enumerate}[(a)]
\item $\dist(\phi_n,\phi'_n) = O_{\mathcal{U}}(\defect(\phi_n$))
\item $\defect(\phi'_n)= o_{\mathcal{U}}(\defect(\phi_n))$
\end{enumerate}

We say that $ (\phi_n)_{n \in \mathbb{N}}$  has the property of defect diminishing if there is an asymptotic homomorphism $(\phi'_n)_{n \in \mathbb{N}}$ that diminishes the defect of $ (\phi_n)_{n \in \mathbb{N}}$.  
\end{defn}

\begin{defn}\label{def_dd2} The group $\Gamma$ has the property of defect diminishing (with respect to $\sC$) if every asymptotic homomorphism to $\sC$ has the property of defect diminishing. 
\end{defn}


\begin{thm} \label{thm:neat}
  Let $\Gamma= \langle S \mid R \rangle$ be a finitely presented group and $\sC$ a class of groups such that each $(G,d) \in \sC$ is a complete metric space.
  Then the group $\Gamma$ has the property of defect diminishing if and only if $\D \preceq \id $. 
\end{thm}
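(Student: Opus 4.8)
The plan is to prove the two implications separately; the implication ``defect diminishing $\Rightarrow \D \preceq \id$'' is the substantial one, while the converse is soft.

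For the converse, suppose $\D \preceq \id$, so that by the remark following Definition~\ref{def_precede} there is $M>0$ with $\HomDist(\phi)\le M\,\defect(\phi)$ for every $\phi$ of sufficiently small defect. Given an asymptotic homomorphism $(\phi_n)$, for each $n$ with $\defect(\phi_n)>0$ I would pick (using that $\HomDist$ is an infimum) a homomorphism $\pi_n\in\mathrm{Hom}(\Gamma,G_n)$ with $\dist(\phi_n,\pi_n\restriction_S)\le \HomDist(\phi_n)+\defect(\phi_n)$ and set $\phi'_n=\pi_n\restriction_S$ (and $\phi'_n=\phi_n$ when $\defect(\phi_n)=0$). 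The point is that $\phi'_n$ is the restriction of a genuine homomorphism, so $\phi'_n(r)=1_{G_n}$ for every $r\in R$ and hence $\defect(\phi'_n)=0$; this makes condition (b) of Definition~\ref{def_dd1} automatic, while $\dist(\phi_n,\phi'_n)\le (M+1)\defect(\phi_n)$ on the ($\mathcal{U}$-large) set where $\defect(\phi_n)$ is small gives (a). Completeness is not used here.

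For the forward implication the heart of the matter, and the main obstacle, is to convert the asymptotic, ultrafilter-dependent hypothesis into a single uniform quantitative step: \emph{there exist $C>0$ and $\delta_1>0$ such that for every $G\in\sC$ and every $\phi:S\to G$ with $\defect(\phi)<\delta_1$ there is $\phi':S\to G$ with}
\[
\dist(\phi,\phi')\le C\,\defect(\phi)\qquad\text{and}\qquad \defect(\phi')\le\tfrac12\defect(\phi).
\]
I would prove this by contradiction through the ultrafilter. If it fails, then taking $C=n$ and $\delta_1=1/n$ yields for each $n$ a map $\phi_n$ with $0<\defect(\phi_n)<1/n$ such that \emph{every} $\psi$ with $\dist(\phi_n,\psi)\le n\,\defect(\phi_n)$ satisfies $\defect(\psi)>\tfrac12\defect(\phi_n)$. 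The sequence $(\phi_n)$ is an asymptotic homomorphism, so defect diminishing furnishes $\phi'_n$ with $\dist(\phi_n,\phi'_n)\le C_0\,\defect(\phi_n)$ on a $\mathcal{U}$-large set and $\defect(\phi'_n)<\tfrac12\defect(\phi_n)$ on a $\mathcal{U}$-large set (the latter because the defect ratio tends to $0$ along $\mathcal{U}$). Since $\{n:n\ge C_0\}$ is cofinite, hence in the non-principal $\mathcal{U}$, the intersection of these three $\mathcal{U}$-large sets is nonempty; for any $n$ in it we have $\dist(\phi_n,\phi'_n)\le C_0\defect(\phi_n)\le n\,\defect(\phi_n)$, so the defining property of $\phi_n$ forces $\defect(\phi'_n)>\tfrac12\defect(\phi_n)$, contradicting $\defect(\phi'_n)<\tfrac12\defect(\phi_n)$.

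Granting the uniform step, I would finish by iteration and completeness. Fix $\phi$ with $\defect(\phi)<\delta_1$ and build $\phi=\phi_0,\phi_1,\phi_2,\dots$ with $\phi_{k+1}=(\phi_k)'$, so that $\defect(\phi_k)\le 2^{-k}\defect(\phi)$ and $\dist(\phi_k,\phi_{k+1})\le C\,2^{-k}\defect(\phi)$. For each $s\in S$ the sequence $(\phi_k(s))$ is Cauchy in $G$, hence converges (here completeness of $(G,d)$ is essential) to some $\psi(s)$; passing to the limit gives $\defect(\psi)=0$, so $\psi(r)=1_G$ for all $r\in R$ and $\psi$ extends to a homomorphism $\Gamma\to G$. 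Summing the telescoping bound gives $\dist(\phi,\psi)\le\sum_{k\ge 0}C\,2^{-k}\defect(\phi)=2C\,\defect(\phi)$, whence $\HomDist(\phi)\le 2C\,\defect(\phi)$ and therefore $\D(\delta)\le 2C\delta$, i.e. $\D\preceq\id$. The only delicate point throughout is the uniform-step extraction; the iteration is routine once it is in hand.
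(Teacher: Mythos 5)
Your proof is correct and follows essentially the same route as the paper's: the converse direction by selecting near-optimal homomorphisms $\pi_n$ on the $\mathcal{U}$-large set where the defect is small, and the forward direction by extracting, via contradiction through the ultrafilter, exactly the uniform halving step of the paper's Proposition~\ref{proposition}, then iterating and using completeness to obtain a limiting homomorphism. If anything, your formulation with non-strict inequalities handles the edge case $\defect(\phi)=0$ slightly more cleanly than the paper's strict-inequality version, but this is a refinement of the same argument, not a different one.
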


\begin{cor}
Let $\Gamma$ be a finitely presented group and $\sC$ a class of groups such that each $(G,d) \in \sC $ is a complete metric space. The group $\Gamma$ has the property of defect diminishing if and only if $D_{\Gamma} \preceq \left[ \id \right]$.
\end{cor}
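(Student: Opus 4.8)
The plan is to obtain the corollary from Theorem~\ref{thm:neat}, which carries all the content. Unwinding Definition~\ref{def_precede}, the relation $\rateD \preceq [\id]$ between equivalence classes means precisely $\D \preceq \id$ for a fixed finite presentation $\langle S \mid R\rangle$; by Proposition~\ref{prop_rate} this condition is independent of the presentation. Theorem~\ref{thm:neat} then shows that, for that presentation, defect diminishing is equivalent to $\D \preceq \id$, so defect diminishing is itself presentation-independent and the corollary is just the presentation-free restatement. It therefore suffices to prove the two implications of Theorem~\ref{thm:neat} for one presentation, which I now sketch.

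For the direction $\D \preceq \id \Rightarrow$ defect diminishing I would argue directly. By the remark after Definition~\ref{def_precede}, $\D \preceq \id$ gives a constant $M$ with $\D(\delta) \le M\delta$ for all small $\delta$, hence $\HomDist(\phi) \le M\defect(\phi)$ whenever $\defect(\phi)$ is small. Given any asymptotic homomorphism $\phi_n : S \to G_n$, I would choose $\pi_n \in Hom(\Gamma,G_n)$ with $\dist(\phi_n,\pi_n\restriction_S) \le \HomDist(\phi_n) + \defect(\phi_n)$ and set $\phi'_n = \pi_n\restriction_S$. Then $\dist(\phi_n,\phi'_n) \le (M+1)\defect(\phi_n) = O_{\mathcal U}(\defect(\phi_n))$, while $\defect(\phi'_n)=0$ since $\pi_n$ kills every relator, so $(\phi'_n)$ diminishes the defect of $(\phi_n)$.

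For the converse I would reduce everything to a single \emph{uniform} improvement step: defect diminishing implies there exist $C>0$ and $\delta_0>0$ such that every $\phi$ into some $G \in \sC$ with $\defect(\phi)<\delta_0$ admits a $\phi'$ into the same $G$ with $\dist(\phi,\phi')\le C\defect(\phi)$ and $\defect(\phi')\le\tfrac12\defect(\phi)$. Granting this, I would iterate it for each fixed $\phi$: setting $\phi^{(0)}=\phi$ and applying the step repeatedly gives $\defect(\phi^{(k)})\le 2^{-k}\defect(\phi)$ and $\dist(\phi^{(k)},\phi^{(k+1)})\le C\,2^{-k}\defect(\phi)$. The partial sums are bounded by $2C\defect(\phi)$, so $(\phi^{(k)})_k$ is Cauchy in each of the finitely many coordinates $s\in S$; by completeness of $G$ it converges to some $\pi:S\to G$ with $\dist(\phi,\pi)\le 2C\defect(\phi)$, and since $\defect(\phi^{(k)})\to 0$ the limit satisfies $\pi(r)=1_G$ for every $r\in R$, i.e.\ $\pi$ extends to a homomorphism. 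Hence $\HomDist(\phi)\le 2C\defect(\phi)$, giving $\D(\delta)\le 2C\delta$ and $\D\preceq\id$.

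The extraction of the uniform step is the one real obstacle, since the constant produced by defect diminishing depends a priori on the sequence; I would handle it by contradiction. If no such $C,\delta_0$ exist, then taking $C=n$ and $\delta_0=1/n$ yields maps $\phi_n$ into $G_n\in\sC$ with $0<\defect(\phi_n)<1/n$ for which \emph{every} $\phi'$ into $G_n$ with $\dist(\phi_n,\phi')\le n\,\defect(\phi_n)$ has $\defect(\phi')>\tfrac12\defect(\phi_n)$. As $\defect(\phi_n)\to 0$, the sequence $(\phi_n)$ is an asymptotic homomorphism, so defect diminishing supplies $(\phi'_n)$, a constant $C_0$, and a set $U\in\mathcal U$ with $\dist(\phi_n,\phi'_n)\le C_0\defect(\phi_n)$ on $U$, together with $U'=\{\,n:\defect(\phi'_n)\le\tfrac12\defect(\phi_n)\,\}\in\mathcal U$. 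On $U\cap U'\cap\{\,n\ge C_0\,\}\in\mathcal U$ one has $\dist(\phi_n,\phi'_n)\le n\,\defect(\phi_n)$, which forces $\defect(\phi'_n)>\tfrac12\defect(\phi_n)$ and contradicts $\defect(\phi'_n)\le\tfrac12\defect(\phi_n)$. This establishes the uniform step and closes the argument.
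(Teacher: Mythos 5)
Your proposal is correct and follows essentially the same route as the paper: the corollary is obtained by unwinding Definition~\ref{def_precede} (so that $\rateD \preceq [\id]$ is literally $\D \preceq \id$, well-defined by Proposition~\ref{prop_rate}) and then invoking Theorem~\ref{thm:neat}. Moreover, your sketch of the theorem itself reproduces the paper's argument in both directions --- choosing near-optimal homomorphisms $\pi_n$ on an ultrafilter-large set for one implication, and for the converse extracting the uniform improvement step by contradiction (this is exactly the paper's Proposition~\ref{proposition}, whose ultrafilter details you in fact spell out more carefully than the paper does) followed by iteration, a Cauchy-sequence argument, and completeness of $(G,d)$.
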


\begin{cor}
The property of defect diminishing does not depend on the particular choice of the presentation of the group $ \Gamma $.
\end{cor}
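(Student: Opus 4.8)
The property of defect diminishing is defined (Definitions~\ref{def_dd1} and~\ref{def_dd2}) through asymptotic homomorphisms $\phi_n\colon S\to G_n$, so it is a priori attached to the chosen generating set $S$ and the relators $R$. The plan is to convert this into a condition on the presentation-independent rate $D_\Gamma$ by means of Theorem~\ref{thm:neat}, and then to invoke Proposition~\ref{prop_rate}.

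First I would fix two finite presentations $\Gamma=\langle S\mid R\rangle=\langle S'\mid R'\rangle$, keeping the standing hypothesis that each $(G,d)\in\sC$ is a complete metric space. Applying Theorem~\ref{thm:neat} to each presentation separately, I obtain that $\Gamma$ has defect diminishing relative to $(S,R)$ if and only if $D_{(S,R)}\preceq\id$, and relative to $(S',R')$ if and only if $D_{(S',R')}\preceq\id$. The problem thus reduces to showing that the two conditions $D_{(S,R)}\preceq\id$ and $D_{(S',R')}\preceq\id$ are equivalent.

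Next I would call on Proposition~\ref{prop_rate}, which asserts $D_{(S,R)}\thicksim D_{(S',R')}$, that is, $D_{(S,R)}\preceq D_{(S',R')}$ and $D_{(S',R')}\preceq D_{(S,R)}$. Combining this with the transitivity of $\preceq$ recorded in Definition~\ref{def_precede}, a hypothesis $D_{(S,R)}\preceq\id$ yields $D_{(S',R')}\preceq D_{(S,R)}\preceq\id$, and the symmetric implication runs the same way; hence the condition $(\,\cdot\,)\preceq\id$ is constant on $\thicksim$-classes. Phrased through Definition~\ref{stability_rate}, this is exactly the statement $D_\Gamma\preceq[\id]$, which refers to no presentation at all, so the preceding Corollary already encapsulates this reduction.

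Since every step is a formal consequence of Theorem~\ref{thm:neat} and Proposition~\ref{prop_rate}, I do not expect a genuine obstacle here; the only point needing a line of verification is that $(\,\cdot\,)\preceq\id$ is invariant under $\thicksim$, and this follows at once from the transitivity of $\preceq$.
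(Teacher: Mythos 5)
Your argument is exactly the paper's own proof in the case it can cover: apply Theorem~\ref{thm:neat} to each presentation, then transfer the condition $(\,\cdot\,)\preceq\id$ across the equivalence $D_{(S,R)}\thicksim D_{(S',R')}$ given by Proposition~\ref{prop_rate}, using transitivity of $\preceq$. That part is correct and needs no repair.

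There is, however, a gap in scope. The corollary as stated carries \emph{no} completeness hypothesis on $\sC$, whereas you import one by declaring it a ``standing hypothesis.'' This is not cosmetic: Theorem~\ref{thm:neat} genuinely requires each $(G,d)\in\sC$ to be complete --- its proof of the direction ``defect diminishing $\Rightarrow$ $\D\preceq\id$'' builds a Cauchy sequence of maps and takes its limit in $(G^S,\dist)$, which need not exist without completeness. So for a class $\sC$ of non-complete metric groups your reduction through $\rateD$ is simply unavailable, and the statement still has content there. The paper acknowledges precisely this: its proof invokes Proposition~\ref{prop_rate} and Theorem~\ref{thm:neat} only ``if $\sC$ is a class of groups such that each $(G,d)\in\sC$ is a complete metric space,'' and then disposes of the general case by a direct argument in the style of Proposition~\ref{prop_rate} --- i.e., one shows that Tietze transformations preserve the property of defect diminishing itself, by rewriting an asymptotic homomorphism for one presentation as one for the other (generators of each presentation are words in the other, so defects and distances change by at most fixed multiplicative constants, using bi-invariance of the metrics). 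To make your proof complete you should either add that direct argument for general $\sC$, or explicitly restrict the corollary to complete classes.
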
 

\begin{proof}
  If $\sC$ is a class of groups such that each $(G,d) \in \sC$ is a complete metric space, the proof follows from the Proposition~\ref{prop_rate} and
  Theorem~\ref{thm:neat}. General case may be proved directly similarly to Proposition~\ref{prop_rate}. 
\end{proof}

For the proof of Theorem~\ref{thm:neat} we need the following proposition.


\begin{prop} \label{proposition}
If $\Gamma=\langle S \mid R \rangle$ has the property of defect diminishing then there exists $M, \varepsilon \in \mathbb{R^+}$ such that for all 
$G \in \sC$ and $\phi \in G^S$ with $\defect(\phi)<\epsilon$ there exists $\psi \in G^S$ such that:

\begin{center}
\begin{enumerate}
\item $\defect(\psi) < \frac{1}{2} \defect (\phi)$. 
\item $\dist(\phi , \psi) < M \defect(\phi)$.
\end{enumerate}
\end{center}

\end{prop}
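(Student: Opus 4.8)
The plan is to argue by contradiction, converting a \emph{uniform} failure of the two conclusions into a \emph{single} asymptotic homomorphism to which the defect diminishing hypothesis cannot apply. Throughout I would assume $\defect(\phi) > 0$: a map of zero defect is already a homomorphism and lies outside the scope of the statement, since the conclusion $\defect(\psi) < \tfrac12 \defect(\phi)$ is only meaningful when $\defect(\phi) > 0$. Accordingly the negation I work with is that for every pair $(M,\varepsilon)$ there is $G \in \sC$ and $\phi \in G^S$ with $0 < \defect(\phi) < \varepsilon$ admitting no good $\psi$.

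Granting this negation, I would apply it with $M = n$ and $\varepsilon = 1/n$ for each $n \in \mathbb{N}$ to produce $G_n \in \sC$ and $\phi_n \in G_n^S$ with $0 < \defect(\phi_n) < 1/n$ such that no $\psi \in G_n^S$ simultaneously satisfies $\defect(\psi) < \tfrac12 \defect(\phi_n)$ and $\dist(\phi_n,\psi) < n\,\defect(\phi_n)$. Since $\defect(\phi_n) < 1/n \to 0$, one has $\lim_{n\to\mathcal{U}}\defect(\phi_n) = 0$, so $(\phi_n)_{n\in\mathbb{N}}$ is an asymptotic homomorphism to $\sC$. Note that no completeness of the $G_n$ is needed here; completeness is a feature used elsewhere, not in extracting these uniform constants.

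Now I would invoke defect diminishing for $\Gamma$: there is an asymptotic homomorphism $(\phi'_n)$ diminishing the defect of $(\phi_n)$. Unpacking, property (a) yields $C>0$ with $U_1 := \{\, n : \dist(\phi_n,\phi'_n) \le C\,\defect(\phi_n) \,\} \in \mathcal{U}$, and property (b) yields $(\varepsilon_n)$ with $\lim_{n\to\mathcal{U}}\varepsilon_n = 0$ and $\defect(\phi'_n) = \varepsilon_n\,\defect(\phi_n)$. The point is to use $\phi'_n$ itself as the sought $\psi$. Set $U_2 := \{\, n : \varepsilon_n < \tfrac12 \,\} \in \mathcal{U}$, available since $\varepsilon_n \to 0$ along $\mathcal{U}$, and $U_3 := \{\, n : n > C \,\}$, which is cofinite and hence in $\mathcal{U}$ because $\mathcal{U}$ is non-principal. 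The intersection $U_1 \cap U_2 \cap U_3$ then lies in $\mathcal{U}$, so it is nonempty; fixing any $n$ in it gives $\defect(\phi'_n) = \varepsilon_n\,\defect(\phi_n) < \tfrac12\,\defect(\phi_n)$ and $\dist(\phi_n,\phi'_n) \le C\,\defect(\phi_n) < n\,\defect(\phi_n)$, so $\psi = \phi'_n$ meets both requirements at index $n$, contradicting the choice of $\phi_n$.

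I expect the main point to be bookkeeping rather than any deep obstacle: one must turn the asymptotic, ultrafilter-based statement of defect diminishing into a pointwise statement at a single finite index by intersecting the three $\mathcal{U}$-large sets above. The two places where care is needed are (i) using $\defect(\phi_n) > 0$ to upgrade the non-strict estimates coming from $O_{\mathcal{U}}$ and $o_{\mathcal{U}}$ to the strict inequalities the conclusion demands, and (ii) using non-principality of $\mathcal{U}$ so that the growing threshold $M=n$ eventually dominates the fixed constant $C$, i.e.\ so that $U_3 \in \mathcal{U}$.
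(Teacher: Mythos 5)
Your proof is correct and takes essentially the same route as the paper: negate the statement, instantiate with $M=n$, $\varepsilon=1/n$ to produce an asymptotic homomorphism $(\phi_n)$, and show that no $(\phi'_n)$ can diminish its defect. You are in fact more careful than the paper's two-line argument in two respects: you make explicit the ultrafilter bookkeeping (intersecting the three $\mathcal{U}$-large sets $U_1, U_2, U_3$ to land at a single index $n$), and you isolate the degenerate case $\defect(\phi)=0$, where the strict inequality $\defect(\psi) < \tfrac{1}{2}\defect(\phi)$ is unsatisfiable and which the paper's proof silently glosses over.
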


\begin{proof} 

Suppose that the conclusion of the proposition is false. Then for every $n \in \mathbb{N}$ there is $\phi_n \in (G_n)^S $ with $G_n \in \sC $ and $\defect(\phi_n)<\frac{1}{n}$, such that every $\psi \in (G_n)^S $ with  $\defect (\psi) < \frac{1}{2} \defect (\phi_n)$ satisfies  $\dist(\phi_n , \psi) \geq n \defect(\phi_n)$.

So we have an asymptotic homomorphism $(\phi_n)_{n \in \mathbb{N}}$ that does not have the property of defect diminishing. Therefore, $\Gamma$ does not have the property of defect diminishing.

\end{proof}
                        

\begin{proof} [Proof of Theorem \ref{thm:neat}]
Suppose that $\D\preceq \id $, that is, there exists $M>0$ and $\delta_0 > 0$ such that $\forall  0< \delta <\delta_0$ we have that $\D (\delta)< M \delta$. 
Let $(\phi_n)_{n \in \mathbb{N}}$ be an asymptotic homomorphism and $\epsilon_n= \defect (\phi_n)$. By the definition of asymptotic homomorphism 
$ \displaystyle\lim_{n \rightarrow \mathcal{U}} \epsilon_n =0$. Let $X=\{n\;|\;\epsilon_n<\delta_0\}$.
For $n\in X$ we have that $\HomDist(\phi_n) < M\epsilon_n$ by Definition \ref{defdistHomDist} and there is a $\pi_n \in Hom(\Gamma,G_n)$ 
that complies $\dist(\phi_n, \pi_n\restriction_S) < M \defect(\phi_n)$. Define $\phi'_n=\pi_n$ for $n\in X$ and $\phi'_n=\phi_n$ for $n\not\in X$.
Then $\phi'_n$ diminishing the defect of $\phi_n$ as $X\in\mathcal{U}$. \\


Suppose that the group $\Gamma$  has the property of defect diminishing. We apply Proposition~\ref{proposition}. Let $M, \varepsilon \in \mathbb{R^+}$ 
be as in Proposition~\ref{proposition}.  
Let $\phi \in G^S $ be with $\defect(\phi)<\varepsilon$. Inductively we may construct a sequence of  maps $\phi_j \in G^S$,  $\phi_0=\phi$, such that
$\defect (\phi_j) < \frac{1}{2} \defect (\phi_{j-1}) < \frac{\varepsilon}{2^{j}}$ and  
$\dist(\phi_j , \phi_{j-1}) < M \defect (\phi_{j-1})<M\defect(\phi)\frac{1}{2^{j-1}}$. 
It follows that  $(\phi_n)_{n \in \mathbb{N}}$ is a Cauchy sequence. Let $\phi_\infty$ be its limit point ($(G^S, \dist)$ is a complete metric space as  
$(G,d) \in \sC $ is). We can check that $\phi_\infty$ is a homomorphism and $\dist(\phi,\phi_\infty)<2M\defect(\phi)$.
It follows that $\D (\delta)<2M \delta$ for $\delta<\varepsilon$. Therefore, $\D \preceq \id$.

\end{proof}


\bibliography{References}
\bibliographystyle{alpha}

\end{document}